\newcommand{\ra}{\rightarrow}
\newcommand{\pr}{\prime}
\newcommand{\R}{\mathbb{R}}
\newcommand{\Z}{\mathbb{Z}}
\newcommand{\abs}[1]{\left\lvert #1 \right\rvert}
\newcommand{\tld}[1]{\widetilde{#1}}
\DeclareMathOperator{\img}{im}
\newtheorem{thm}{Theorem}
\newtheorem{lemma}{Lemma}
\theoremstyle{definition}
\newtheorem{defin}{Definition}
\newtheorem*{defin*}{Definition}
\newtheorem{ques}{Question}
\theoremstyle{remark}
\begin{document}

\title{Shrinking Without Doing Much At All}

\author{Michael Freedman}
\address{\hskip-\parindent
	Michael Freedman \\
	Microsoft Research, Station Q, and Department of Mathematics \\
	University of California, Santa Barbara \\
	Santa Barbara, CA 93106
}

\author{Michael Starbird}
\address{\hskip-\parindent
	Michael Starbird \\
	Department of Mathematics \\
	The University of Texas at Austin \\
	Austin, TX 78712
}

\begin{abstract}
    In 1952 Bing astonished the mathematical world with his wild involution on $S^3$. It has been among the most seminal examples in topology. The example depends on finding shrinking homeomorphisms of Bing's decomposition of $S^3$ into points and arcs. If Bing's original homeomorphisms are varied, Bing's original wild involution changes by conjugation, which preserves some analytic properties \cite{fs22} while altering others. In 1988, Bing published a second paper ``Shrinking Without Lengthening,'' answering a question that one of the present authors posed to him in an effort to understand the geometry of the entire conjugacy class. In this paper we produce a counterintuitive construction, namely, a method to shrink the Bing decomposition doing almost nothing at all--neither lengthening much nor rotating much.
\end{abstract}

\maketitle

\section{Introduction}
Prior to 1952, decomposition space theory (DST) was primarily a tool for two-dimensional topology. Moore's theorem that (in modern language) every upper semicontinuous decomposition of the plane is shrinkable was its high tide mark. In 1952, Bing produced a startlingly novel sequence of shrinking homeomorphisms, and with it a \emph{wild} involution on the three sphere $S^3$. The involution is wild in that it cannot be made \emph{smooth} in any system of coordinates. This single example invigorated several decades of research in DST, beginning with a rich, colorful theory in three dimensions, where manifold factors were first discovered \cite{ar65}. Then the subject jumped into high dimensions, where in separate works, J.\ Cannon and R.\ Edwards showed that the double suspensions of homology spheres are homeomorphic to a standard sphere \cites{can78,edw78}, a project that culminated in fundamental results of F.\ Quinn and R.\ Edwards on manifold recognition (see \cites{qui82,dav86} for an introduction). The Bing decomposition and its close relatives were also fundamental to the proof of the four-dimensional Poincar\'{e} Conjecture \cite{freedman82}. A.\ Dranishnikov and collaborators constructed remarkable dimension raising quotient maps \cite{ds86}. Combining Bing-style DST and Quinn-style surgery, J.\ Bryant, S.\ Ferry, W.\ Mio, and S.\ Weinberger constructed the modern theory of ANR homology manifolds \cite{bfmw96}.

After this 45-year burst of activity, DST visibly slowed. It had simply been \emph{too} successful in solving its core problems. The present paper is part of a reconsideration of DST with analytical aspects in mind. An earlier paper \cite{fs22} answered a long-standing question about the analytical properties of the Bing involution. It turns out that it can never be made Lipschitz or even quasi-conformal. In fact, any topological conjugate of the Bing involution was shown to have, up to an annoying polylog factor, an exponential modulus of continuity. The estimate for this intrinsic modulus of continuity (imoc) requires thinking, not just about \emph{one} shrink of the Bing decompostion $\mathcal{D}$, but \emph{all} possible shrinks (since the different shrinks can  be thought of as conjugates of any single one).

During the proof, we mostly found technical arguments to confirm our beliefs, but in one case we found a counter-intuitive surprise. The ``surprise shrink'' is the subject of this paper. We present it for two reasons. First, because Bing’s decomposition $\mathcal{D}$ is the ur-example of DST, any new insight into what is or is not required to shrink it should be recorded. Second, it is the hope of the authors that the method presented here might be combined with \cite{fs22} to strengthen the main result of that paper and remove the annoying polylog factor.  At first this hope seems odd since this paper provides a novel \emph{shrinking} method and \cite{fs22} is, in a sense, a \emph{non-shrinking} result: $\mathcal{D}$ can only be shrunk by doing great violence to its $\Z_2$-reflection symmetry. But in the proof of \cite{fs22} the polylog originates from the possibility that the imagined adversarial shrinker at some point starts to ``delay'' by making only tiny motions. The present paper gives some insight into what classes of such ``tiny motions,'' indeed, result in shrinking, and when they do, how to quantify the violations of symmetry. The reader is invited to join us in this game, of proving a strictly exponential imoc for the Bing involution; we have no proof, merely a hunch.

Beyond reverence for the Bing involution, what is the purpose of joining DST to analysis? Our answer is 4-manifolds. By an historical accident the topological theory of 4-manifolds arose simultaneously with Donaldson's theory of smooth 4-manifolds. Donaldson theory immediately implied that the infinite constructions of DST could not generally yield smoothable results. In a sense, the topologists were given an easy way out – a crisp no-go theorem. In 1982, there was no appetite to dig into shrinking arguments and determine exactly where, and how much, regularity was lost. The companion paper \cite{fs22} is a proof of principle that this work can be done. In dimension 3, all homeomorphisms are approximable by diffeomorphisms, so to frame the question, some additional structure, such as an involution, must be present. In dimension 4, the loss of smoothness through the infinite processes of DST has yet to be investigated geometrically, but perhaps is now in range.

The shrinking strategy presented here is an homage to Bing's final paper, “Shrinking Without Lengthening” \cite{bing88}, which Bing wrote to answer a question one of us asked him at the time. To integrate decomposition theory into analysis it is crucial to understand not just some shrinks, but all shrinks, of $\mathcal{D}$.

\emph{Bing's decomposition}. Bing's decomposition is made by intersecting finite stages called ``Bing rings,'' nested solid tori (see Figure \ref{fig:double}).  The shrink amounts to figuring out a strategy for stretching, twisting, bending, and/or rotating each finer pair of \emph{daughter} solid tori within the previous \emph{mother} stage. Bing found that the rings need be lengthened only infinitesimally during the shrink, hence his title. We find that not only can the length of the solid tori be nearly preserved, but the ``rotations'' of the daughters within the mother can also be made arbitrarily small and can be made to decay towards zero.

The Alexander Horned Sphere is the first known wild embedding of $S^2$ in $S^3$. One method of creating that wild embedding of $S^2$ is as follows: Start with a standard $S^2$ in $S^3$. For ease of visualization, think of $S^2$ as the $y\-z$ plane in $R^3$ with a point at infinity that makes $R^3$ into $S^3$ and makes the $y\-z$ plane a standard embedding of $S^2$ in $S^3$. On the positive $x$ side of this $S^2$, construct a specific Cantor set's worth of arcs--one end of each arc will be on $S^2$, while the other ends of those arcs will be rather entangled among each other. Specifically, the Cantor set's worth of arcs are created by taking the components of an infinite intersection of families $\{C_i\}_{i=0}^\infty$ of U-shaped solid cylinders. Each family $C_i$ has $2^i$ components. $C_0$ consists of a single U-shaped cylinder as pictured in Figure \ref{fig:decompositions}. Every component of $C_i$ contains two U-shaped cylinders of $C_{i+1}$ embedded as shown in Figure \ref{fig:decompositions}. Notice that the feet of each cylinder in $C_i$ get increasingly close to one another as $i$ increases. So in the limit, each component of $\cap_{i=0}^\infty C_i$ is an arc meeting $S^2$ in a single point, and the totality of those arc-endpoints is a Cantor set on $S^2$. Shrinking that Cantor set's worth of arcs is key to creating the Alexander Horned Sphere embedding of $S^2$. The set consisting of those arcs together with the remaining points of $S^3$ is an upper semi-continuous decomposition $\mathcal{D_A}$ of $S^3$. 

\begin{figure}[ht]
	\centering
	\begin{tikzpicture}
        \node at (0,0) {\includegraphics[scale=0.65]{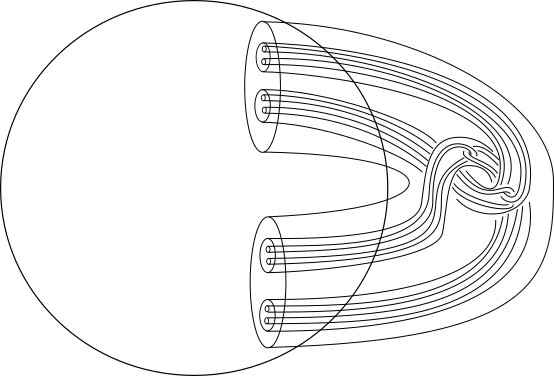}};
        \node at (-3.2,-2.2) {$S^2$};
        \node at (3.3,2.3) {$C_0$};
        \node at (5.35,0) {$C_1$};
        \draw[<->] (4.5,0.3) -- (5.05,0) -- (4.5,-0.3);
        \node at (1.35,0) {$C_2$};
        \draw[<->] (0.5,2) -- (1.05,0) -- (0.6,1.15);
        \draw[<->] (0.5,-1.95) -- (1.05,0) -- (0.6,-0.95);
    \end{tikzpicture}
	\caption{$C_2$ has 4 components, continuing the pattern as shown.}\label{fig:decompositions}
\end{figure}

The Alexander Horned Sphere can be created as the image of $S^2$ under a surjective map $g^\pr:S^3 \ra S^3$ whose point pre-images are exactly the elements of $\mathcal{D_A}$. The function $g^\pr$ can easily be constructed as the limit of homeomorphisms $\{g_i^\pr:S^3 \ra S^3\}_{i=0}^\infty$ that increasingly compress the arcs of $\mathcal{D_A}$ toward their endpoints off $S^2$. Describing this process in terms of quotient spaces, the function $g^\pr$ shows that $S^3 \slash \mathcal{D}_A \cong S^3$, so the 3-sphere has not been changed, but the 2-sphere $g^\pr(S^2)$ is the \emph{wild} Alexander Horned Sphere. Its wildness is reflected in the fact that the right side component of its complement is no longer simply connected but instead has an infinitely generated fundamental group. The simple closed curve around the center of the cylinder $C_0$ is an example of a non-trivial element of the fundamental group of the wild component of $S^3 - g^\pr(S^2)$. The image under $g^\pr$ of the arcs in $\mathcal{D_A}$ is a wild Cantor set that lies on the Alexander Horned Sphere.

Consider the two \emph{closed} complementary regions of $S^3 - g^\pr(S^2)$. The closed left side region is clearly homeomorphic to the ball $B^3$, whereas the closed right side region, the Alexander Horned Ball, is clearly not, having a non-simply connected interior. 

Now we create a new decomposition $\mathcal{D}$ of $S^3$ by creating non-trivial elements that are symmetric across $S^2$. Let $I_0$ be the standard involution of $S^3$ with fixed point set $S^2$, that is, $I_0(x,y,z)=(-x,y,z)$. The non-degenerate elements of $\mathcal{D}$ are the components of the infinite intersection $\bigcap_{i=0}^\infty \tld{\mathcal{T}}_i$, where each $\tld{\mathcal{T}}_i = C_i \cup I_0(C_i)$. So each $\tld{\mathcal{T}}_i$ is the union of $2^i$ tori, each torus of which meets $S^2$ in two meridional disks. We will denote each torus component of $\tld{\mathcal{T}}_i$ by $\tld{T}_\sigma$ where $\sigma$ is a binary string of length $i$ and $\tld{T}_{\sigma 0}$ and $\tld{T}_{\sigma 1}$ are the two component tori of $\tld{\mathcal{T}}_{i+1}$ contained in $\tld{T}_\sigma$. $\tld{\mathcal{T}}_0$, $\tld{\mathcal{T}}_1$, and $\tld{\mathcal{T}}_2$ are drawn in Figure \ref{fig:double}. Notice that $\bigcap_{i=0}^\infty \tld{\mathcal{T}}_i$ is a Cantor set's worth of arcs, each one piercing $S^2$ at its center point. Those arcs comprise the non-degenerate elements of $\mathcal{D}$.

\begin{figure}[ht]
	\centering
	\input{Inserts/Fig2.tex}
	\caption{}\label{fig:double}
\end{figure}

In his 1952 paper, Bing showed that there is a surjective map $g:S^3 \ra S^3$ whose point pre-images are precisely the sets in $\mathcal{D}$, thereby producing an involution $I:S^3 \ra S^3$ defined by $I(x) = gI_0g^{-1}(x)$. The involution $I$ is wild since its fixed point set is the wild 2-sphere $g(S^2)$, and the closure of each component of the complement of $g(S^2)$ is an Alexander Horned Ball. The involution $I$ swaps sides across a wild sphere. 

Bing's map $g:S^3 \ra S^3$ is produced as the limit of homeomorphisms $\{g_i:S^3 \ra S^3\}_{i=0}^{\infty}$. The $g_i$ homeomorphisms eventually shrink the non-degenerate elements of $\mathcal{D}$ to increasingly smaller diameters as $i$ increases in such a way that the limit $g$ has the property that each non-degenerate element of $\mathcal{D}$ shrinks to a point and $\{g^{-1}(x) | x \in S^3\} = \mathcal{D}$. So the challenge is to produce the homeomorphisms $g_i$ that shrink the arcs of $\mathcal{D}$.

\emph{Bing's shrinks}.
The astonishing conclusion of \cite{bing52}
is that such shrinking homeomorphisms exist. Let us review two shrinks of $\mathcal{D}$ that Bing published in \cite{bing52} and \cite{bing88}
. In both shrinks, Bing reduces the shrinks to an essentially 1D model where the only important measure of diameter is displacement along the $x$-axis. Denoting the ``Bing tori'' dyadically, he lays them out along the $x$-axis and measures their diameters discretely by choosing a large integer $n$ and erecting parallel planes in intervals of $\frac{1}{n}$. So, the original tori are positioned as in Figure \ref{fig:plane_positions}.

\begin{figure}[ht]
	\centering
	\input{Inserts/Fig3.tex}
	\caption{}\label{fig:plane_positions}
\end{figure}

Bing's original shrink in his 1952 paper was accomplished by describing a sequence of homeomorphisms $\{g_i\}_{i=0}^\infty$. Each homeomorphism $g_{i+1}$ agrees with $g_i$ on $S^3 - \tld{\mathcal{T}_i}$. The homeomorphisms $g_i$ are defined in sets, meaning we first define the first $n_1$ $g_i$'s and then sort of pause while we celebrate a certain amount of shrinking, for example, we could choose our first collection of homeomorphisms such that for every component torus $\tld{T}_\sigma$ of $\tld{\mathcal{T}}_{n_1}$, diam($g_{n_1}(\tld{T}_\sigma)$) meets some diameter goal, say $< \frac{2}{10^1}$. Then we start with a new diameter goal, say, $< \frac{2}{10^2}$ and create the next set of $g_i$'s, say $i=n_1+1,...,n_2$. It is only at the end of each set of homeomorphisms that the diameters shrink. In other words, when we look at the diameters of the components of $\{g_j(\tld{\mathcal{T}}_j)\}_{j=n_1+1}^{n_2-1}$, generally those images of tori do not have increasingly smaller diameters; however, finally at stage $n_2$, for each torus $\tld{T}_\sigma \subset \tld{\mathcal{T}}_{n_2}$, diam$(g_{n_2}(\tld{T}_\sigma)) < \frac{2}{10^2}$.  

The homeomorphisms $\{g_i\}_{i=1}^{n_1}$ are defined as follows: let's choose our first goal to be to shrink each torus at some stage to diameter less than, say, $\frac{2}{10^1}$. Imagine the first stage torus as long, thin, and flat. Then position sufficiently many parallel planes, say $n_1$ planes, such that the distance between consecutive parallel planes is less than $\frac{1}{10^1}$ and such that the torus $\tld{T}_0$ intersects each of the $n_1$ planes in a pair of meridional disks. The first homeomorphism $g_1$ leaves $S^3 - \tld{\mathcal{T}}_0$ fixed and rotates $\tld{T}_0 \cup \tld{T}_1$ in $\tld{\mathcal{T}}_0$ such that each intersects one fewer plane (see Figure \ref{fig:rotated_rings}). For simplicity, for each component torus $\tld{T}_\sigma \subset \tld{\mathcal{T}}_i$, we will denote $g_i(\tld{T}_\sigma)$ by $T_\sigma$. So after the rotation and using our new notation, $T_0$ and $T_1$ each intersects only $n_1-1$ planes. In general, each subsequent pair of daughter tori are rotated in their already moved mother so that each of the $k$-stage daughters meet only $n_1-k$ planes (Figure \ref{fig:rotated_rings}).

\begin{figure}[ht]
	\centering
	\input{Inserts/Fig4.tex}
	\caption{}\label{fig:rotated_rings}
\end{figure}

After $n_1$ generations, no $T_\sigma$, $\sigma$ a binary word of length $\abs{\sigma} = n_1$, meets more than one of the planes, so its $x$-axis extent is $< \frac{2}{10^1}$. Normal to the $x$-axis, we are free to have chosen a strong compression, so this procedure produces a homeomorphism $g_{n_1}$ that shrinks each $n_1$-stage torus to diameter less than $\frac{2}{10}$. Shrinking the tori of course shrinks the decomposition elements therein. 

Next we choose a new diameter goal, say, $\frac{2}{10^2}$. Make many tick marks (actually, meridional disks on parallel planes) along the partially shrunk $T_\sigma$'s, $|\sigma|=n_1$ such that the distance between consecutive (around $T_\sigma$) meridional disks chosen is less than $\frac{1}{10^2}$. Now start our process over. That is, let $g_{n_1}+1$ rotate the two daughters in each $T_\sigma$ in such a way that those daughters each intersect one fewer meridional disk. Continue defining the $g_i$'s, each reducing the number of meridional disks intersected by each stage torus, until we reach a number $n_2$ such that every $T_\sigma$ where $|\sigma|=n_2$ meets at most one meridional disk. Again by compressing dimensions other than the $x$-extent means every such $T_\sigma$ has diameter less than $\frac{2}{10^2}$, as desired. Notice that, because of the folded nature of the $T_\sigma$'s where $|\sigma|=n_1$, the images under the $g_i$'s starting with $i=n_1+1$ do not decrease the diameters of the $T_\sigma$'s for a long time, but when we reach $n_2$, we can again pause to celebrate successful shrinking.  

So after sufficient celebration, we start again and repeat the process with an even more ambitiously small diameter goal. Continue producing such $g_i$'s. In the limit, the $g_i$'s converge to a surjective function $g:S^3 \ra S^3$ whose non-degenerate point pre-images are precisely the non-degenerate elements of $\mathcal{D}$. This then was Bing's original method of shrinking the decomposition $\mathcal{D}$.

Next we summarize Bing's 1988 shrink, which he produced in answer to questions we asked him at that time. In his 1988 shrink, every other rotation of tori is \emph{greedy}, as it tries (usually in vain) to cut diameters in half. The alternate rotations are \emph{patient}. It turns out greed does not speed the shrinking; it is only at steps indexed by $2^n - 1$, that diameters \emph{actually} are halved. So, again, many steps are taken during which no diameter shrinking is accomplished. In pictures here is the idea of Bing's 1988 shrink \cite{bing88} (Figure \ref{fig:halving}).

\begin{figure}[ht]
	\centering
	\begin{tikzpicture}
	    \node at (0,0) {\includegraphics[scale=0.9]{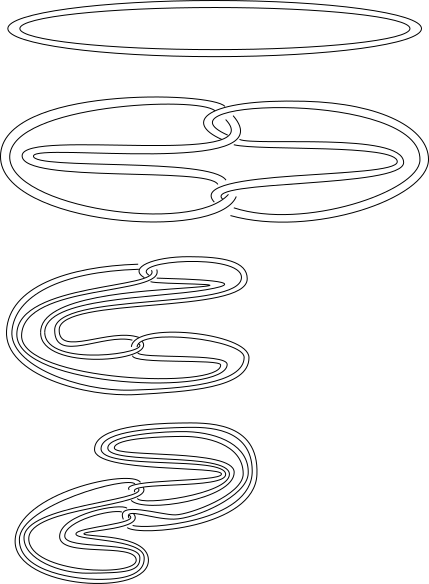}};
        \node at (1.7,5.1) {$\downarrow$ Step 1. Diam halves};
        \node at (0.5,1.2) {$\swarrow$ Step 2. Diam unchanged};
        \node at (3.2,1.2) {$\searrow$};
        \node at (3.7,0.9) {$\ddots$};
        \node at (0.1,-2.8) {$\searrow$ Step 3. Diam halves};
        \node at (-3.8,-2.8) {$\swarrow$};
        \node[rotate=90] at (-4.35,-3.3) {$\ddots$};
        \node at (0.5,-7.9) {$\downarrow$ Step $2^n-1$. Diam halves};
        \node at (-1.67,-7.3) {$\vdots$};
	\end{tikzpicture}
	\caption{}\label{fig:halving}
\end{figure}

\section{A Small Displacement Bing Shrink}
As described in the introduction, every known shrink of the Bing decomposition consists of starting with a standard torus and describing at each stage how to displace the clasp points of the two daughters relative to the two clasp points of the mother. For known shrinks, the clasps can be imagined as arbitrarily tight, and the reduction of diameter can be studied in a strictly 1D model where the starting torus is configured to tightly surround the unit interval. In this way, the diameter of a torus at a future stage is measured by the $x$-axis width that that folded solid torus covers of $[0,1]$.

Describing the positions of the folded tori at each stage can therefore be captured as a binary tree of functions into $[0,1]$ as follows. We start with an example.

\begin{figure}[ht]
    \centering
    \input{Inserts/Fig6.tex}
    \caption{}\label{fig:example}
\end{figure}

The first, straight torus $T_\varnothing$ is modeled by the identity function $f_\varnothing: [0,1] \ra [0,1]$. In the example in Figure \ref{fig:example}, the clasp points of the daughters $T_0$ and $T_1$ are displaced by distance $\frac{1}{3}$ at the left end and distance $\frac{1}{4}$ at the right end.

The following two functions, $f_0$ and $f_1$, describe the configurations of $T_0$ and $T_1$. $f_0: [-\frac{1}{3},\frac{3}{4}] \ra [0, \frac{3}{4}]$ takes the interval $[-\frac{1}{3},0]$ backwards from $\frac{1}{3}$ to 0 and then proceeds forwards from 0 to $\frac{3}{4}$, thus modeling the shape of $T_0$ in $T$. Likewise, $f_1: [\frac{1}{3}, \frac{1}{4}] \ra [\frac{1}{3},\frac{5}{4}]$.

Next we describe how to inductively produce a binary tree of functions $f_\sigma$ into $[0,1]$ whose images re-trace the patterns of the folded tori created by displacing the clasping points of daughter pairs relative to clasping points of their mothers. We define $f_\varnothing$ as the identity map on $[0,1]$.

Suppose $\sigma$ is a finite, binary string and the function $f_\sigma: [c_\sigma, d_\sigma] \ra [0,1]$ has been produced. The string $\sigma$ has two daughters $\sigma 0$ and $\sigma 1$, and $f_\sigma$ will have two daughters $f_{\sigma 0}$ and $f_{\sigma 1}$. The passage from $f_\sigma$ to its daughters depends on the choice of a pair of distinct points $a_\sigma, b_\sigma$ in $[c_\sigma,d_\sigma]$, $c_\sigma \leq a_\sigma \leq b_\sigma \leq d_\sigma$. We call $a_\sigma$ and $b_\sigma$ the daughter clasp points. Technically, each is a pair of nearby turning points, one for each daughter, but we abuse notation by denoting each pair of turning points as a single clasp point. No harm will result. The rule is that $f_{\sigma0}$ re-traces $f_\sigma$ backwards from $a_\sigma$ to $c_\sigma$, then forwards from $c_\sigma$ to $b_\sigma$, and $f_{\sigma1}$ re-traces $f_\sigma$ forward from $b_\sigma$ to $d_\sigma$ and then backwards from $d_\sigma$ to $a_\sigma$.

\begin{figure}[ht]
    \centering
    \begin{tikzpicture}
    \draw (-6,0) -- (6,0);
    \draw[fill=black] (-6,0) circle (0.25ex);
    \draw[fill=black] (-3.75,0) circle (0.25ex);
    \draw[fill=black] (-1.5,0) circle (0.25ex);
    \draw[fill=black] (2.5,0) circle (0.25ex);
    \draw[fill=black] (4.25,0) circle (0.25ex);
    \draw[fill=black] (6,0) circle (0.25ex);
    
    \node at (-6,-0.3) {$2c-a$};
    \node at (-3.75,-0.3) {$c$};
    \node at (-1.5,-0.3) {$a$};
    \node at (2.5,-0.3) {$b$};
    \node at (4.25,-0.3) {$d$};
    \node at (6,-0.3) {$2d-b$};
    \draw[->] (2,-1) -- (4.05,-1) arc (-90:90:0.2) -- (-1.9,-0.6);
    \draw[->] (-2,-0.6) -- (-3.55,-0.6) arc (90:270:0.2) -- (1.9,-1);
    \node at (-1.5,-1.3) {$f_{\sigma0}$};
    \node at (3,-1.3) {$f_{\sigma1}$};
    
    \node at (-1.25,-2.5) {$f_{\sigma0}: [2c-a,b] \ra [0,1]$};
    \node at (0,-3.1) {$f_{\sigma0}(x) = f_\sigma(-x + 2c),\ 2c-a \leq x \leq c$};
    \node at (-0.6,-3.7) {$f_{\sigma0}(x) = f_\sigma(x),\ c \leq x \leq b$, and};
    \node at (-1.25,-4.3) {$f_{\sigma1}: [a,2d-b] \ra [0,1]$};
    \node at (-1.05,-4.9) {$f_{\sigma1}(x) = f_\sigma(x),\ a \leq x \leq d$};
    \node at (0,-5.5) {$f_{\sigma1}(x) = f_\sigma(-x + 2d),\ d \leq x \leq 2d-b$};
\end{tikzpicture}
    \caption{}\label{fig:fmaps}
\end{figure}

In Figure \ref{fig:fmaps} all occurrences of $a$, $b$, $c$, and $d$ above implicitly carry a $\sigma$ subscript, dropped for readability.

Abstracting the concept of shrinking a decomposition, specifically Bing's decomposition $\mathcal{D}$, there are many interesting choices for the tree of \emph{clasp points} $\{a_\sigma,b_\sigma\}$, and hence the tree of functions $\{f_\sigma\}$.

\begin{defin}
    (1) A binary tree of functions $\{f_\sigma\}$ starting with $f_\varnothing$ being the identity on $[0,1]$ and defined as above will be called a \emph{Bing tree of functions}. (2) A Bing tree of functions $\{f_\sigma\}$ \emph{shrinks} iff $\operatorname{diam}(\img(f_\sigma)) \ra 0$ whenever the bit string length $\abs{\sigma} \ra \infty$.
\end{defin}

Let's refer to $\abs{a_\sigma - c_\sigma}$ and $\abs{d_\sigma - b_\sigma}$ as displacements. If the two displacements are equal, we can say the daughters are \emph{rotated} in the mother. Earlier, we put rotation in quotes because, in our construction, the displacements will not be exactly equal. The previously known shrinks of the Bing decomposition all included some large displacements, that is, instances where $\abs{a_\sigma - c_\sigma}$ and $\abs{d_\sigma - b_\sigma}$ were relatively large compared to $\abs{d_\sigma - c_\sigma}$. However, we show in this paper that it is possible to construct a Bing shrink, or equivalently, a Bing tree of functions $\{f_\sigma\}$ that shrinks even though all the displacements are small. The shrink we will produce has the additional property that the length of each torus, or equivalently the domain of each $f_\sigma$, grows by less than any desired quantity.

Before constructing our small displacement shrink, let's make some observations about the functions $f_\sigma$ in a Bing tree of functions. When visualizing the following, it might be useful to imagine the displacements, that is, the $\abs{a_\sigma - c_\sigma}$'s and $\abs{d_\sigma - b_\sigma}$'s as very small compared to the length of $\abs{d_\sigma - c_\sigma}$. Note that each function $f_\sigma: [c_\sigma, d_\sigma] \ra [0,1]$ is piecewise linear with each piece having slope $\pm 1$.

For specificity, we will discuss the daughter $f_{\sigma 0}$ of $f_\sigma$, the case for $f_{\sigma 1}$ being similar. The domain of $f_\sigma$, namely, $[c_\sigma, d_\sigma]$ shifts downward to create the domain of $f_{\sigma 0}$, namely, $[c_\sigma - (a_\sigma - c_\sigma) = 2c_\sigma - a_\sigma, b_\sigma]$. Let's think about the relationship between the images of $f_\sigma$ and $f_{\sigma 0}$.

First notice that the domains of $f_\sigma$ and $f_{\sigma 0}$ share the interval $[c_\sigma, b_\sigma]$, and, therefore, agree there. The function $f_{\sigma 0}$ is defined on the additional interval $[2 c_\sigma - a_\sigma, c_\sigma]$, but $f_{\sigma 0}([2c_\sigma - a_\sigma, c_\sigma])$ $= f_\sigma([c_\sigma, a_\sigma])$ as sets, so no new points are added to the image of $f_{\sigma 0}$ compared to $f_\sigma$. So $\img(f_{\sigma 0}) \subset \img(f_\sigma)$. These observations prove the following lemma that records how an $f_\sigma$ relates to its ancestors.

\begin{lemma}\label{lm:bin_string}
    Let the binary string $\tau$ be an ancestor of $\tau^\pr$ in a Bing tree of functions. Let $M[\tau, \tau^\pr] = \max\{a_\mu \mid \tau \leq \mu \leq \tau^\pr \}$ and $m_{[\tau, \tau^\pr]} = \min\{b_\mu \mid \tau \leq \mu \leq \tau^\pr\}$. Then
    \begin{enumerate}
        \item $f_{\tau^\pr}$ restricted to $[M_{[\tau, \tau^\pr]}, m_{[\tau, \tau^\pr]}] = f_\tau$ restricted to $[M_{[\tau, \tau^\pr]}, m_{[\tau, \tau^\pr]}]$

        \item $\operatorname{length}(f_{\tau^\pr}([c_{\tau^\pr}, M_{[\tau, \tau^\pr]}])) \leq \max\{\abs{a_\mu - c_\mu} \mid \tau \leq \mu \leq \tau^\pr \}$

        \item $\operatorname{length}(f_{\tau^\pr}[m_{[\tau,\sigma]}, d_{\tau^\pr}]) \leq \max\{\abs{d_\mu - b_\mu} \mid \tau \leq \mu \leq \tau^\pr\}$,
    \end{enumerate}
    where $\operatorname{length}()$ denotes the length of a subset of $[0,1]$.
\end{lemma}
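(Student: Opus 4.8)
The plan is to prove all three parts together by walking down the path $\tau=\mu_0,\mu_1,\dots,\mu_k=\tau^\pr$ of strings from $\tau$ to $\tau^\pr$ and iterating the single–step relations between a function and a daughter that are recorded just before the lemma. First I would state those one–step facts cleanly. For every $\sigma$, the domains of $f_\sigma$ and $f_{\sigma0}$ overlap in $[c_\sigma,b_\sigma]$ and $f_{\sigma0}$ agrees with $f_\sigma$ there, while on the extra piece $[2c_\sigma-a_\sigma,c_\sigma]$ the map $f_{\sigma0}$ merely re-traces $f_\sigma|_{[c_\sigma,a_\sigma]}$ reflected in the $x$–variable about $c_\sigma$; symmetrically $f_{\sigma1}=f_\sigma$ on $[a_\sigma,d_\sigma]$ and on the extra piece $[d_\sigma,2d_\sigma-b_\sigma]$ it re-traces $f_\sigma|_{[b_\sigma,d_\sigma]}$. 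In particular $\img(f_{\sigma0})=f_\sigma([c_\sigma,b_\sigma])$ and $\img(f_{\sigma1})=f_\sigma([a_\sigma,d_\sigma])$, and in either case the daughter and mother agree on the common subinterval $[a_\sigma,b_\sigma]$.

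For part (1) I would induct on the path length $k$. Each consecutive pair $f_{\mu_i},f_{\mu_{i+1}}$ agrees on $[a_{\mu_i},b_{\mu_i}]$, hence on the smaller interval $[M,m]$ (writing $M=M_{[\tau,\tau^\pr]}$, $m=m_{[\tau,\tau^\pr]}$), and $[M,m]$ lies in the domain of every $f_{\mu_i}$ because $c_{\mu_i}\le a_{\mu_i}\le M\le m\le b_{\mu_i}\le d_{\mu_i}$, and also in the domain of $\tau^\pr$ since $c_{\tau^\pr}\le M$ and $m\le d_{\tau^\pr}$ follow directly from the daughter formulas applied at $\mu_{k-1}$. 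Composing these equalities along the path gives $f_{\tau^\pr}|_{[M,m]}=f_\tau|_{[M,m]}$; if $M>m$ the interval is empty and there is nothing to prove.

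For parts (2) and (3), which are mirror images of one another under $0\leftrightarrow1$, $a\leftrightarrow b$, $c\leftrightarrow d$, I would pick a string $\mu^\ast$ on the path with $a_{\mu^\ast}=M$ and prove, by descending induction along the subpath from $\mu^\ast$ to $\tau^\pr$, that $f_{\tau^\pr}\big([c_{\tau^\pr},M]\big)\subseteq f_{\mu^\ast}\big([c_{\mu^\ast},a_{\mu^\ast}]\big)$. The inductive step is exactly the one–step picture: passing to daughter $0$ prepends a re-traced copy of $f_{\mu_j}|_{[c_{\mu_j},a_{\mu_j}]}$ whose image already lies inside $f_{\mu_j}([c_{\mu_j},M])$ since $a_{\mu_j}\le M$, while passing to daughter $1$ only shifts the left endpoint rightward from $c_{\mu_j}$ to $a_{\mu_j}$, shrinking the interval; either way $f_{\mu_{j+1}}([c_{\mu_{j+1}},M])\subseteq f_{\mu_j}([c_{\mu_j},M])$. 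Because each $f_\sigma$ is piecewise linear with all slopes $\pm1$, it is $1$–Lipschitz, so the image of an interval is an interval of no greater length; hence $\operatorname{length}\big(f_{\tau^\pr}([c_{\tau^\pr},M])\big)\le \operatorname{length}\big(f_{\mu^\ast}([c_{\mu^\ast},a_{\mu^\ast}])\big)\le \abs{a_{\mu^\ast}-c_{\mu^\ast}}\le\max\{\abs{a_\mu-c_\mu}\mid\tau\le\mu\le\tau^\pr\}$, which is (2), and (3) is the same computation with $b,d$ replacing $a,c$.

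The step I expect to be the main obstacle is the bookkeeping: as one descends the tree the number of turning points of $f_{\mu_j}$ grows, and one must keep straight which turning points bound the subintervals $[c_{\mu_j},\,\cdot\,]$ and $[\,\cdot\,,d_{\mu_j}]$ being tracked, and verify that every interval appearing inside some $f_{\mu_j}(\cdot)$ truly lies in that function's domain. This is a genuine wrinkle precisely when the clasp points along the path ``cross,'' so that $M>m$: there $M$ may exceed $d_{\tau^\pr}$, and parts (2) and (3) should then be read as bounding the length of the image of the portion of $[c_{\tau^\pr},M]$ (respectively $[m,d_{\tau^\pr}]$) that meets $\operatorname{dom}(f_{\tau^\pr})$ — together they still bound $\operatorname{length}(\img f_{\tau^\pr})$. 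Once the invariant ``re-traced pieces add nothing new to the current image'' is installed with care, all three parts drop out of the single one–step analysis.
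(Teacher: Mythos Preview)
Your proposal is correct and follows essentially the same route as the paper: the paper records the single-step facts (daughters agree with the mother on $[a_\sigma,b_\sigma]$, and the reflected tail adds no new image points) and then simply declares that ``these observations prove the following lemma,'' which is exactly the induction along the path $\tau=\mu_0,\dots,\mu_k=\tau^\pr$ that you spell out. Your extra care about domains and the degenerate $M>m$ case is a refinement rather than a different idea.
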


During the coming construction of the $f_\sigma$'s, from time to time we will pause at an $f_\tau$ and then construct descendants in a prescribed manner until we pause again at an $f_{\tau^\pr}$. The pause positions will have the property that either $c_{\tau^\pr} = M_{[\tau,\tau^\pr]}$ or $d_{\tau^\pr} = m_{[\tau,\tau^\pr]}$. In such a case, $f_{\tau^\pr}$ consists of two parts---one part \emph{re-traces} $f_\tau$, namely (1) $f_{\tau^\pr}$ restricted to $[M_{[\tau, \tau^\pr]}, m_{[\tau, \tau^\pr]}] = f_\tau$ restricted to $[M_{[\tau,\tau^\pr]}, m_{[\tau, \tau^\pr]}]$, but if $c_{\tau^\pr} = M_{[\tau,\tau^\pr]}$ or $d_{\tau^\pr} = m_{[\tau,\tau^\pr]}$, then either the $M_{[\tau,\tau^\pr]}=c_{\tau^\pr}$ or $m_{[\tau,\tau^\pr]}=d_{\tau^\pr}$, and (2) a highly folded mapping with image $f_{\tau^\pr}([c_{\tau^\pr}, M_{[\tau, \tau^\pr]}])$ or $f_{\tau^\pr}([m_{[\tau, \tau^\pr]}, d_{\tau^\pr}])$. By the Lemma, the length of the images of the folds is less than $\epsilon_{D_{[\tau, \tau^\pr]}} = $ the maximum displacement among $\mu$'s between $\tau$ and $\tau^\pr$, which is either the right hand side of (2) or (3) in Lemma \ref{lm:bin_string}. We will call such an $f_{\tau^\pr}$ a $[A,\epsilon$-$W]$ function, where $A$ is the part of the domain on which $f_{\tau^\pr}$ agrees with or re-traces $f_{\tau}$ and $\epsilon$ is a bound on the length of the image of the remainder of the domain---the 'wiggles', hence the use of the letter '$W$'. Notice that in the domain of an $[A,\epsilon$-$W]$ function, the $A$ part could either be an interval at the lower end of the domain, as suggested by the notation, or the higher end of the domain interval.

\begin{figure}[ht]
    \centering
    \begin{tikzpicture}[scale=1.4]
    \draw (0,3) -- (0,-0.2);
    \draw (-0.2,0) -- (5,0);
    \node at (0,-0.4) {$c_\tau$};
    \draw (3.75,0.2) -- (3.75,-0.2);
    \node at (3.75,-0.4) {$d_\tau$};
    \draw (0,1) -- (0.5,0.5) -- (1.5,1.5) -- (2,1) -- (2.5,1.5) -- (2.75,1.25) -- (3.25,1.75) -- (3.75,1.25);
    
    \draw (0,-1) -- (0,-4.2);
    \draw (-0.2,-4) -- (5,-4);
    \node at (0,-4.4) {$c_\tau$};
    \draw (1.25,-3.8) -- (1.25,-4.2);
    \node at (1.25,-4.4) {$c_{\tau^\pr}$};
    \draw (3.75,-3.8) -- (3.75,-4.2);
    \node at (3.75,-4.4) {$d_\tau$};
    \draw (4.5,-3.8) -- (4.5,-4.2);
    \node at (4.5,-4.4) {$d_{\tau^\pr}$};
    \draw (1.25,-2.75) -- (1.5,-2.5) -- (2,-3) -- (2.5,-2.5) -- (2.75,-2.75) -- (3.25,-2.25) -- (3.75,-2.75) -- (3.8,-2.7) -- (3.85,-2.75) -- (3.9,-2.7) -- (3.95,-2.75) -- (4,-2.7) -- (4.05,-2.75) -- (4.1,-2.7) -- (4.15,-2.75) -- (4.2,-2.7) -- (4.25,-2.75) -- (4.3,-2.7) -- (4.35,-2.75) -- (4.4,-2.7) -- (4.45,-2.75) -- (4.5,-2.7);
    \node at (4.5,-2.75) {$]$};
    \node at (5.2,-2.8) {$< \epsilon_{D_{\tau, \tau^\pr}}$};
\end{tikzpicture}
    \caption{A $[A,\epsilon$-$W]$ function. Note that $d_{\tau^\pr}$ may be less than $d_\tau$, and that the part with $\epsilon$-bounded image wiggles could be at the lower end.}
\end{figure}
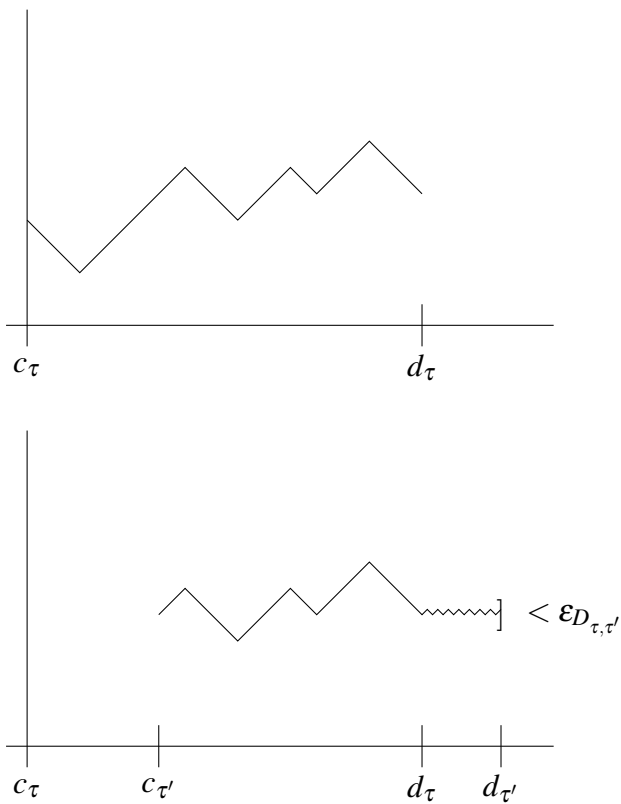

So now we are ready to construct a shrink that uses only small displacements.

\begin{thm}
    For any $\epsilon_L > 0$ and $\{\epsilon_i > 0 \mid \sum_{i=1}^\infty \epsilon_i^2 = \infty\}$, there exists a Bing tree of functions $\{f_\sigma\}$ that shrinks such that for every $\sigma$,
    \begin{enumerate}
        \item $\abs{d_\sigma - c_\sigma} < 1 + \epsilon_L$, and

        \item for $\abs{\sigma} = i$, $\abs{a_\sigma - c_\sigma} < \epsilon_i$ and $\abs{d_\sigma - b_\sigma} < \epsilon_i$.
    \end{enumerate}
\end{thm}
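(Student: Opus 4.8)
The plan is to build the tree of clasp points $\{a_\sigma,b_\sigma\}$ in \emph{stages}, pausing at a nested sequence of $[A,\epsilon\text{-}W]$ functions and using Lemma \ref{lm:bin_string} to control what happens between pauses. Fix goals $t_1>t_2>\cdots\to 0$. I will arrange that once the construction reaches a pause $f_\tau$ with $\operatorname{diam}(\img(f_\tau))<t_j$, then after finitely many further generations every descendant $f_{\tau'}$ at the next pause has $\operatorname{diam}(\img(f_{\tau'}))<t_{j+1}$; since every infinite path passes a stage-$j$ pause for each $j$, this forces $\operatorname{diam}(\img(f_\sigma))\to 0$, i.e. the tree shrinks. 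Between a pause $f_\tau:[c_\tau,d_\tau]\to[0,1]$ and the next one I work inside a \emph{block} of levels $B=\{i_0,\dots,i_1\}$ and at every node $\mu$ of the block I use \emph{balanced} displacements $a_\mu-c_\mu=d_\mu-b_\mu=:\delta_\mu$ depending only on the level; taking $\delta_\mu\le\min\{\epsilon_i:i\in B\}$ secures (2), and balance makes $|d_\mu-c_\mu|$ \emph{exactly preserved} from level to level, so $|d_\sigma-c_\sigma|$ never exceeds its value $\approx1$ at $f_\varnothing$ and (1) holds with room to spare (the slack $\epsilon_L$ is only needed to absorb a small sign-controlled imbalance inserted for bookkeeping and the $O(t_1)$ of stretching before the invariant below takes over). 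I also keep $\delta_\mu$ below a fixed fraction of the current image diameter, which preserves the structural invariant that, away from small wiggles at its two ends, each $f_\mu$ is a single monotone run — precisely the $[A,\epsilon\text{-}W]$ shape — and is why $\delta_\mu$ must decrease as the image shrinks.

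The effect of a block is read off from Lemma \ref{lm:bin_string}: along a path from $\tau$ to $\tau'$ in the block, $f_{\tau'}$ equals $f_\tau$ on $[M_{[\tau,\tau']},m_{[\tau,\tau']}]$ and is ``all wiggles'' of image length $<\epsilon_{D_{[\tau,\tau']}}\le\min_{i\in B}\epsilon_i$ elsewhere. Because the moves are balanced, each step slides the whole domain interval by $\pm\delta_\mu$, so a descendant that always picks daughter $1$ drives $M_{[\tau,\tau']}=\max a_\mu$ up by essentially $\sum\delta_\mu$, one that always picks daughter $0$ drives $m_{[\tau,\tau']}=\min b_\mu$ down by essentially $\sum\delta_\mu$, and in either ``committed'' case $\img(f_{\tau'}|_{[M,m]})$ contracts by roughly the accumulated displacement on the active side. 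The delicate case is the \emph{frustrated} descendant that oscillates between the two daughters: with constant $\delta_\mu$ consecutive steps can exactly cancel, so such a path makes no progress at all. The fix is to let $\delta_\mu$ \emph{ramp up slowly} across the block; then the per-step contributions telescope and the frustrated path still accumulates a shave of order the largest displacement used. Either way one obtains a lower bound on the shrink over the block that is \emph{uniform over all descendants}: over a block on displacement scale $\delta$ one gets $\operatorname{diam}(\img(f_{\tau'}))^2\le\operatorname{diam}(\img(f_\tau))^2-c\,(\text{block length})\,\delta^2$ for a universal $c>0$ (equivalently, $\operatorname{diam}^2$ drops by $\gtrsim\delta^2$ per step); making the block long enough pushes the diameter below $t_{j+1}$.

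Accumulating these estimates over all blocks, $\operatorname{diam}(\img(f_\sigma))^2$ along any path is at most $1-c\sum_i\delta_i^2$, the sum over levels traversed, with $\delta_i$ comparable to $\epsilon_i$ in the regime that matters. Since $\sum_{i=1}^\infty\epsilon_i^2=\infty$ — hence also $\sum\epsilon_i=\infty$, which the committed-path estimates use — the telescoping product of the factors $1-c\,\delta_i^2/\operatorname{diam}^2$ diverges to $0$, so $\operatorname{diam}(\img(f_\sigma))\to 0$ along every path; the tree shrinks, and (1), (2) hold by the choices above.

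The step I expect to be the real obstacle is exactly the \emph{uniformity over all descendants}: the oscillating ``frustrated'' path gets no help from naïve balanced moves, and ruling it out forces both the ramped-displacement trick and a careful symmetric accounting of the two end wiggles — conveniently tracked by $f_\mu(c_\mu)-\min\img(f_\mu)$ and $\max\img(f_\mu)-f_\mu(d_\mu)$, each of which one checks obeys a reflect-at-zero recursion under the daughter moves, with a genuine shave recorded only on the side opposite the chosen daughter. The secondary technical burden is keeping the single-monotone-run ($[A,\epsilon\text{-}W]$) shape alive as the image, and therefore the admissible displacement, shrinks, which is what couples the block lengths and the choice of $\delta_\mu$ to the running diameter.
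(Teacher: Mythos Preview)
Your proposal has a genuine gap, and it is precisely the point you yourself flag as the ``real obstacle.'' With \emph{balanced} displacements the passage $(c_\sigma,d_\sigma)\mapsto(c_{\sigma0},d_{\sigma0}),(c_{\sigma1},d_{\sigma1})$ is a shift by $\pm(\delta_\mu,\delta_\mu)$ along the diagonal, so the whole block is a one–dimensional walk. Your own accounting says the frustrated path accumulates a shave of order $\delta_{\max}$ over an $N$–step block; that gives a drop in $\operatorname{diam}^2$ of order $\delta_{\max}\cdot\operatorname{diam}$, \emph{not} $cN\delta^2$, so the ``uniform over all descendants'' inequality you state simply does not hold. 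Worse, since the frustrated path shaves only $O(\delta_{\max})\le O(\min_{i\in B}\epsilon_i)$ regardless of block length, you cannot choose the block long enough to force \emph{every} leaf below the next target $t_{j+1}$: some leaves stall at $t_j-O(\delta_{\max})$. Ramping does not repair this; it only converts ``zero progress'' into ``$O(\delta_{\max})$ progress,'' which is not summably comparable to $\sum\epsilon_i^2$. In short, a one–parameter (diagonal) walk has no potential that increases for both daughters at rate $\gtrsim\delta^2$, and the $[A,\epsilon\text{-}W]$ invariant alone does not supply one.

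The paper's proof solves exactly this by \emph{abandoning balance}: it plots $(c_\sigma,d_\sigma)$ in the plane and sends both daughters along the tangent of a concentric circle to the next larger circle, the center placed far out on the slope $-1$ ray so that every point on the arc still satisfies $d-c<1+\epsilon_L$. Then the \emph{radius} is a potential that increases for both daughters by exactly the same amount, and by Pythagoras $r_{i+1}^2-r_i^2=\epsilon_i^2$; the hypothesis $\sum\epsilon_i^2=\infty$ is precisely what makes the radii eventually engulf the target quadrant, so \emph{every} path exits after a bounded number of circles. This monotone two–dimensional potential is the missing idea; note that the tangent direction is only approximately diagonal, so the displacements are slightly unbalanced—this is why $\epsilon_L>0$ is genuinely needed, not just ``bookkeeping,'' and why a purely balanced scheme cannot reproduce the argument.
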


\begin{proof}

    We will describe the functions $f_\sigma$ by describing the domain intervals $[c_\sigma, d_\sigma$]. Notice that knowing the domain intervals of $f_\sigma$ and of $f_{\sigma 0}$ automatically implies what $a_\sigma$ and $b_\sigma$ are and what $f_{\sigma 1}$ is. Specifically, if the domain of $f_\sigma$ is $[c_\sigma, d_\sigma]$ and the domain of $f_{\sigma 0}$ is $[c_{\sigma 0}, d_{\sigma 0}]$, then $a_\sigma = c_\sigma + (c_\sigma - c_{\sigma 0}) = 2c_\sigma - c_{\sigma 0}$ and $b_\sigma = d_{\sigma 0}$. So the domain of $f_{\sigma 1}$ is also determined.

    It will be convenient to associate each domain interval $[c_\sigma, d_\sigma]$ with the point in the plane $(c_\sigma, d_\sigma)$. Every such point $(c_\sigma, d_\sigma)$ lies above the main diagonal $\Delta$ and the horizontal (or vertical) distance from $\Delta$ to $(c_\sigma, d_\sigma)$ equals the length of $[c_\sigma, d_\sigma]$. Notice that interval inclusion, that is, $[u, v] \subset [c_\sigma, d_\sigma]$, corresponds to the point $(c_\sigma, d_\sigma)$ lying in the NW quadrant with respect to $(u,v)$.

    Notice that the vector from the point in the plane $(c_\sigma, d_\sigma)$ to $(c_{\sigma 0}, d_{\sigma 0})$, $\overrightarrow{(c_\sigma - a_\sigma, b_\sigma - d_\sigma)}$, is the negative of the vector from $(c_\sigma, d_\sigma)$ to $(c_{\sigma 1}, d_{\sigma 1})$. These observations will allow us to construct the desired $f_\sigma$'s by describing a binary tree of points in the plane.

    Our goal is to create $(c_\sigma, d_\sigma)$'s with length $< 1 + \epsilon_L$ and $\operatorname{length}(f_\sigma([c_\sigma, d_\sigma]))$ getting short. Our iterative procedure always begins at a $[A,\epsilon$-$W]$ function. Suppose $(c_\tau, d_\tau)$ is a $[A,\epsilon$-$W]$ function. Then $\operatorname{length}(f_\tau(c_\tau, d_\tau))$ is less than the length of the domain of the re-trace part plus $\epsilon$, the maximum length of the image of the part with small wiggles.
    
    \emph{The strategy in brief}. (1) Sometimes we work on shortening the domain of the re-trace part while keeping the same $\epsilon$ of the wiggle part. Since all functions are piecewise linear with slopes of each piece $\pm 1$, the image of the re-trace part will always be less than or equal to the length of the domain of the re-trace part. (2) After shortening the re-trace part, the $\epsilon$ of the wiggle part limits how short the image is, hence the wiggle part needs attention. So we then choose a smaller $\epsilon$, say $\frac{\epsilon}{2}$, and begin again, that is, we think of the entire interval $[c_\tau, d_\tau]$ as the re-trace part and proceed to shorten the re-trace part using $\frac{\epsilon}{2}$-wiggles.

    This process of shortening the re-trace part domain---in a process that requires many stages---with the same size $\epsilon$-wiggles, followed by starting again with a smaller $\epsilon$ yields the result that the images of descendant $f_\sigma$'s eventually have increasingly smaller images.

    So now we need to describe the procedure for moving from point $(c_\sigma, d_\sigma)$ to points $(c_{\sigma 0}, d_{\sigma 0})$ and $(c_{\sigma 1}, d_{\sigma 1})$.

    \emph{Shrinking the re-trace part.} Suppose $f_\tau$ is a $[A,\epsilon$-$W]$ function. That is, $f_\tau: [c_\tau, d_\tau] \ra [0,1]$ is a piecewise linear function with each piece having slope $\pm 1$, $\abs{d_\tau - c_\tau} < 1 + \epsilon_L$ and there is a $t \in [c_\tau, d_\tau]$ such that (i) $\operatorname{length}(f_\tau([t,d_\tau])) < \epsilon$ or (ii) $\operatorname{length}(f([c_\tau, t])) < \epsilon$. Say (i) is the case (the (ii) case being similar), so $[c_\tau, t]$ is the re-trace part. Then there exists a finite Bing tree of functions starting with $f_\tau$ such that for every final descendant $f_{\tau^\pr}$, $f_{\tau^\pr}$ is a $[A,\epsilon$-$W]$ function such that $\abs{d_{\tau^\pr} - c_{\tau^\pr}} < 1 + \epsilon_L$ and there is a $t^\pr \in [c_{\tau^\pr}, d_{\tau^\pr}]$ such that either (i)$\abs{t^\pr - c_{\tau^\pr}} < \frac{2}{3} \abs{t - c_\tau}$ and  $\operatorname{length}(f_{\tau^\pr}([t^\pr,d_{\tau^\pr}])) < \epsilon$ or (ii) $\abs{t^\pr - d_{\tau^\pr}} < \frac{2}{3} \abs{t - c_\tau}$ and  $\operatorname{length}(f_{\tau^\pr}([c_{\tau^\pr}, t^\pr])) < \epsilon$. In other words, the length of the domain of the re-trace part of $f_{\tau^\pr}$ is less than $\frac{2}{3}$ the length of the domain of the re-trace part of $f_\tau$, while the image of the remainder of the domain has length less than $\epsilon$.

    Figure \ref{fig:NW_quad} shows the NW quadrant over the point corresponding to the middle third of the re-trace part domain of $f_\tau$. The re-trace part domain of $f_\tau$ is $[c_\tau, t]$. So, the middle third is $[c_\tau + \frac{1}{3}(t - c_\tau), c_\tau + \frac{2}{3}(t - c_\tau)]$.

    \begin{figure}
        \centering
        \begin{tikzpicture}[scale=1.4]
    \draw (0,0) -- (8,0);
    \draw (5,-1.5) -- (5,5);
    \draw (-1.1,0.05) arc (-76:-20:7.9);
    \draw (-0.7,0) arc (-74:-20:8.1);
    \draw (-0.3,-0.05) arc (-72:-20:8.3);
    \draw (0.1,-0.1) arc (-70:-20:8.5);
    \draw (0.5,-0.15) arc (-68:-20:8.7);
    \draw (0.9,-0.2) arc (-66:-20:8.9);
    \draw (1.3,-0.25) arc (-64:-20:9.1);
    \draw (1.7,-0.3) arc (-62:-20:9.3);
    \draw (2.1,-0.35) arc (-60:-20:9.5);
    \draw (2.5,-0.4) arc (-58:-20:9.7);
    \draw (2.9,-0.4) arc (-56:-20:9.9);
    \draw (3.3,-0.4) arc (-54:-20:10.1);
    \draw (3.7,-0.4) arc (-52:-20:10.3);
    \draw (4.1,-0.4) arc (-50:-20:10.5);
    \draw (4.5,-0.4) arc (-48:-20:10.7);
    \draw (4.9,-0.4) arc (-46:-20:10.9);
    
    \draw[fill=black] (3.32,3) circle (0.2ex);
    \node at (2.8,3.1) {$(c_\tau, d_\tau)$};
    \draw (2.2,1.53) -- (4.42,4.4);
    \draw[fill=black] (2.2,1.53) circle (0.2ex);
    \node at (1.5,1.6) {$(c_{\tau 0}, d_{\tau 0})$};
    \draw[fill=black] (4.42,4.4) circle (0.2ex);
    \node at (3.7, 4.5) {$(c_{\tau 1}, d_{\tau 1})$};
    \draw (0.85,0.43) -- (3.57,2.6);
    \draw (3.8,2.9) -- (4.73,5.1);
    \node at (6.6,-0.5) {$(c_\tau + \frac{1}{3}(t-c_\tau),$};
    \node at (7,-0.9) {$c_\tau + \frac{2}{3}(t - c_\tau)$};
    \draw[->] (5.5,-0.4) -- (5.1,-0.1);
    
    \node at (4.2,-2.2) {Blow-up view};
    \draw (2.9,-5.2) arc (-70:-20:3);
    \draw (3.2,-5.3) -- (4.9,-3.6);
    \draw (3,-5.38) arc (-70:-15:3.3);
    \draw (2.3,-5.68) arc (-85:-2:3.4);
    \draw (2.4,-5.67) -- (4.34,-4.77);
    \draw (4.5,-4.6) -- (5.3,-2.6);
    \draw[fill=black] (4,-4.5) circle (0.2ex);
    \draw[fill=black] (3.2,-5.3) circle (0.2ex);
    \draw[fill=black] (4.9,-3.6) circle (0.2ex);
    \draw[fill=black] (4.34,-4.77) circle (0.2ex);
    \draw[fill=black] (4.5,-4.6) circle (0.2ex);
    \node at (3.3,-4.3) {$(c_{\tau 0}, d_{\tau 0})$};
    \node at (2.1,-5.15) {$(c_{\tau 00}, d_{\tau 00})$};
    \node at (4.3,-3.2) {$(c_{\tau 01}, d_{\tau 01})$};
    \node at (5.2,-5) {$(c_{\tau 001}, d_{\tau 001})$};
    \node at (5.55,-4.6) {$(c_{\tau 010}, d_{\tau 010})$};
\end{tikzpicture}
        \caption{}\label{fig:NW_quad}
    \end{figure}

    The figure also suggests a large number of concentric circles centered at some distant point in that NW quadrant on the slope $-1$ ray from $(c_\tau + \frac{1}{3}(t - c_\tau), c_\tau + \frac{2}{3}(t - c_\tau))$ heading up and left. Recall that $\abs{d_\tau - c_\tau} < 1 + \epsilon_L$. The center point of the circles is so distant that every point $(x,y)$ on the arc of the circle containing $(c_\tau, d_\tau)$ in the pictured NW quadrant has $y - x < 1 + \epsilon_L$. This choice of circle center will guarantee that no point $(c_\sigma, d_\sigma)$ that is created during our process has length greater than $1+\epsilon_L$.

    The bullseye pattern of circles is chosen such that for any point on one of those circles, the distance along the tangent to the next circle is $< \min\{\epsilon, \epsilon_i\}$ where $\epsilon$ is the wiggle width $\epsilon$ and $i = \abs{\sigma}$ at each stage.

    Now we are ready to construct our binary tree of points $(c_\sigma, d_\sigma)$. We begin at $(c_\tau, d_\tau)$ and proceed along the tangent of its circle in both directions until we hit the next circle. Those two points will be $(c_{\tau 0}, d_{\tau 0})$ and $(c_{\tau 1}, d_{\tau 1})$. From each of those points we do the same thing---that is, from $(c_{\tau 0}, d_{\tau 0})$ we move along the tangent of its circle to find the points $(c_{\tau 00}, d_{\tau 00})$ and $(c_{\tau 01}, d_{\tau 01})$ on the next circle.

    We continue creating this binary tree of points until we arrive at a point $(c_{\tau_1}, d_{\tau_1})$ where $c_{\tau_1} > c_\tau + \frac{1}{3} (t-c_\tau)$ or $d_{\tau_1} < c_\tau + \frac{2}{3}(t - c_\tau)$, that is, when $(c_{\tau_1}, d_{\tau_1})$ does not lie in the NW quadrant with respect to $(c_\tau + \frac{1}{3}(t - c_\tau), c_\tau + \frac{2}{3}(t - c_\tau))$. At such a point we pause to state a simple lemma.
    
    \begin{lemma}
        Let $\{\epsilon_i > 0 \mid \sum_{i=0}^\infty \epsilon_i^2 = \infty\}$ and let $\{c_i\}_{i=1}^\infty$ be a nested sequence of concentric circles in $\R^2$ centered at point $c$ such that for every $i$, the radius of $C_i$ is $r_i$ and the distance from a point on $C_i$ along the tangent to $C_i$ to a point on $C_{i+1}$ is $\epsilon_i$. Then the sequence of radii $(r_i)_{i=1}^\infty$ is unbounded.
    \end{lemma}
    
    \begin{proof}
        Let $M \in \R^+$. Given the hypotheses we will show that there is a $k$ such that $r_k \geq M$. If not, then for every $i$, $r_i < M$.
        
        \begin{figure}[ht]
        \centering
        \begin{tikzpicture}
            \draw (0,0) circle (1.5);
            \draw (0,0) circle (3);
            \draw[fill=black] (0,0) circle (0.25ex);
            \draw (0,0) -- (0,-1.5) -- (2.6,-1.5) -- cycle;
            \node at (-0.4,3.2) {$C_{i+1}$};
            \node at (-0.3,1.7) {$C_i$};
            \node at (0,0.3) {$c$};
            \node at (-0.3,-0.75) {$r_i$};
            \node at (1.3,-1.8) {$\epsilon_i$};
            \node at (1.8,-0.7) {$r_{i+1}$};
        \end{tikzpicture}
        \end{figure}
        
        By the Pythagorean Theorem,
        \[
            r_{i+1}^2 - r_i^2 = \epsilon_i^2\text{, so } r_{i+1} - r_i = \frac{\epsilon_i^2}{r_{i+1} + r_i} > \frac{\epsilon_i^2}{2M}
        \]
        
        But $\sum \frac{\epsilon_i^2}{2M}$ diverges, meaning some $r_k > M$.
    \end{proof}

    Let us now notice two things: (1) Every path branch of the binary tree of points stops by a predictable stage since our hypothesis about the divergence of the sum of the squares of the $\epsilon_i$'s guarantees that there are only a finite number of circles before the point $(c_\tau + \frac{1}{3}(t - c_\tau), c_\tau + \frac{2}{3}(t - c_\tau))$ is itself inside the circle.  Any point constructed with our procedure on that circle must be outside the NW quadrant. (2) Since we stop at the first time that $c_{\tau_1} > c_\tau + \frac{1}{3}(t - c_\tau)$ or $d_{\tau_1} < c_{\tau_1} + \frac{2}{3}(t - c_\tau)$, say $c_\tau > c_\tau + \frac{1}{3}(t - c_\tau)$, then $c_{\tau_1}$ must equal $M_{[\tau, \tau_L]}$, so $f_{\tau_1}$ is a $[A,\epsilon$-$W]$ function. That is, $f_{\tau_1}$ agrees with $f_\tau$ on $[M_{[\tau, \tau_1]}, m_{[\tau, \tau_1]}] = [c_{\tau_1}, m_{[\tau, \tau_1]}]$ and $f_{\tau_1}([m_{[\tau, \tau_1]}], d_{\tau_1}])$ must have image length less than $\epsilon$. Therefore, the re-trace part of each such $f_{\tau_1}$ must have domain length less than $\frac{2}{3}$ the domain length of the re-trace part of $f_\tau$, while the image of the wiggly part has length less than $\epsilon$; therefore, $\operatorname{length}(f_{\tau_1}([c_{\tau_1},d_{\tau_1}])$ is less than $\frac{2}{3}$ the domain length of the re-trace part of $f_\tau$; plus $\epsilon$.  And the domain of $f_{\tau_1}$, that is, $[c_{\tau_1},d_{\tau_1}]$, has length less than $1 + \epsilon_L$.

    We can now repeat the process of shrinking the re-trace part using the same $\epsilon$ size wiggles until the re-trace part itself has domain length less than $\epsilon$. At that point, the total length of the image of that function must be less than $2\epsilon$ and its domain length is less than $1+\epsilon_L$. We now proceed to produce functions with images less than $\epsilon$ by thinking of the entire domain as the re-trace part, but now using $\frac{\epsilon}{2}$-wiggles. Then we repeat the entire process infinitely often producing functions with image lengths less than $\frac{\epsilon}{2}$, $\frac{\epsilon}{4}$, $\frac{\epsilon}{8}, \dots$, thereby creating the desired shrink.
\end{proof}
    
\section{Conclusion and Questions}
    
Bing's 1952 construction of a wild involution of $S^3$ opened the door to many further insights---and many further questions. In \cite{fs22}, we proved that all conjugates of Bing's involution \emph{must} share certain analytic features with the involution derived from Bing's original shrink. Understanding such shared features turned up a surprise, the subject of this paper, whose exact relation to the previous paper is still to be worked out. Bing always told us not to form fixed beliefs about what you do not know. Following his advice, let us state some questions without presuming to guess their answers.

\begin{ques}
    The first question involves the rate at which small displacements can shrink. Bing's original method of shrinking and his shrinking without lengthening method shrink stage tori to size about $\frac{1}{n}$ their original diameter at stage order $n$. The analysis of \cite{as89} shows that there is no faster way to shrink the Bing decomposition $\mathcal{D}$. It appears if one imposes constraints on displacement and lengthening, as we have here, shrinking must be even slower. For example, if lengthening and displacement are restricted to 0.1\%, our algorithm takes about $10^{12}$ stages to get from diameter $= 1$ to diameter $= 0.001$. What is the actual functional form for shrinking using our algorithm? And are there more efficient algorithms respecting the same constraints? In both cases, what are the analytical properties, the modulus of continuity (moc), of the corresponding involutions?
\end{ques}
    
\begin{ques}
    Suppose we use the 1D model for shrinking in this paper. We showed that it is possible to shrink the Bing decomposition with small displacement shrinks. Bing's shrinks and the shrink in this paper seem to require some insight or even cleverness, but might that apparent cleverness be an illusion? Suppose the $a_\sigma$'s and $b_\sigma$'s were simply chosen randomly in the intervals $[c_\sigma,d_\sigma]$. Would such a random selection lead to a shrink of the Bing decomposition with probability $1$? If so, wouldn't we feel silly. Shrinking (or not) is a \emph{tail event}, meaning independent of any initial segment of choices, so Kolmogorov's 0-1 law tells us that within a probabilistic model, shrinking will occur with either probability 0 or 1. Which, depends on the model. If the model is artificially concentrated near our explicit shrink, the probability will be 1, but if $\{(a_\sigma, b_\sigma)\}$ are independent and uniformly distributed, we do not know.
\end{ques}

We were slow to accept that Bing's decomposition could be shrunk using only tiny jiggles. Bing understood the unknown is actually \emph{unknown}. He told us that he would work from both directions, keep an open mind, and not care which way the truth turns out.

\bibliography{references}

\end{document}